\documentclass[11pt,a4paper]{article}
\usepackage[utf8]{inputenc}
\usepackage{amsmath,amssymb,amsthm,mathtools}
\usepackage{graphics} 
\usepackage{graphicx}  
\usepackage{epstopdf}
\usepackage[colorlinks=true]{hyperref}
\usepackage{tikz}
\usepackage{makeidx}
\usepackage{subfigure}
\usepackage{caption}
\usepackage{setspace}
\usepackage{a4wide}

\theoremstyle{plain}
\newtheorem{theorem}{Theorem}[section]

\newtheorem{lemma}[theorem]{Lemma}

\theoremstyle{definition}

\theoremstyle{remark}
\newtheorem{remark}[theorem]{Remark}

\newcommand{\R}{\mathbb{R}}
\newcommand{\Dt}{\Delta t}
\newcommand{\Dx}{\Delta x}


\begin{document}
 
\title{Properties of the LWR model with time delay}

\author{Simone G\"ottlich\footnotemark[1], \; Elisa Iacomini\footnotemark[1], \; Thomas Jung\footnotemark[4]}

\footnotetext[1]{University of Mannheim, Department of Mathematics, 68131 Mannheim, Germany (goettlich@uni-mannheim.de, eiacomin@mail.uni-mannheim.de)}
\footnotetext[4]{Fraunhofer Institute ITWM, 67663 Kaiserslautern, Germany (thomas.jung@itwm.fraunhofer.de)}

\date{ \today }
\maketitle

\begin{abstract}
\noindent
In this article, we investigate theoretical and numerical properties of the first-order Lighthill-Whitham-Richards (LWR) traffic flow model with time delay. Since standard results from the literature are not directly applicable to the delayed model, we mainly focus on the numerical analysis of the proposed finite difference discretization. The simulation results also show that the delay model is able to capture Stop \& Go waves. 
\end{abstract}

{\bf AMS Classification.} 35L65, 90B20, 65M06   	 

{\bf Keywords.} Macroscopic traffic flow models, hyperbolic delay partial differential equation, numerical simulations 


\section{Introduction}

Nowadays traffic models have become an indispensable tool in the urban and extraurban management of vehicular traffic.
Understanding and developing an optimal transport network, with efficient
movement of traffic and minimal traffic congestions, will have a great socio- economical impact on the society. This is why  in the last decades an intensive research activity in the field of traffic flow modelling flourished.

Literature about traffic flow is quite large and many methods have been developed resorting to different approaches.
Starting from the natural idea of tracking every single vehicle, several microscopic models, based on the idea of Follow-the-Leader, grew-up for computing positions, velocities and accelerations of each car by means of systems of ordinary differential equations (ODEs) \cite{aw2002SIAP,braskstone2000car,cristiani2016NHM,difrancesco2017MBE,treiber2013traffic}. Other ways go from kinetic \cite{herty2020bgk,puppo2016kinetic,tosin2017MMS} to macroscopic fluid-dynamic and measures approaches \cite{aw2000SIAP,cacace2018measure,camilli2018measure,piccolibook,Piccoli2012review,rosini2013BOOK}, focusing on averaged quantities, such as 
the traffic density and the speed of the traffic flow, by means of systems of hyperbolic partial differential equations (PDEs), in particular conservation laws. In this way we loose the detailed level of vehicles' description, indeed they become indistinguishable from each other. 
The choice of the scale of observation mainly depends on the number of the involved vehicles, the size of the network and so on.

In this paper we deal with the macroscopic scale, in particular we will focus on first order macroscopic models. The most relevant model in this framework is the LWR model, introduced by Lighthill, Whitham \cite{lighthill1955RSL} and Richards \cite{richards1956OR} in the '50. The main idea underlying this approach is that the total mass has to be preserved, since cars can not disappear. Moreover, in this model the mean velocity is supposed to be dependent on the density, thus is closing the equation. 
On the other hand the lacks of the LWR model are well-known. For example, it fails to generate capacity drop, hysteresis, relaxation, platoon diffusion, or spontaneous congestions like Stop \& Go waves, that are typical features of traffic dynamics.
These drawbacks are due to the fact that the LWR model represents a simplification of the reality, assuming that accelerations are instantaneous and traffic is described only at the equilibrium.

In order to overcome these issues, second order models have been proposed, see \cite{aw2002SIAP,aw2000SIAP, zhang2002non}. They take into account
the non-equilibria states, assuming that accelerations are not instantaneous. To do this, the equation that describes the variation of the velocity in time has to be added to the system, replacing the typical given law of the fist order models.
Other ways are also possible to improve first order models, just considering phase transition models \cite{blandin2011general,colombo2003hyperbolic} or multi-scale approaches \cite{colombo2015mixed,cristiani2019interface}.
Instead of switching to second order models, we propose a first order macroscopic model with a time delay term in the flux function, for taking into account that the velocity can not change instantaneously. In this framework the delay represents the reaction time of both drivers and vehicles.

At a microscopic level, a model with time delay appears for the first time in the work done by Newell \cite{newell1961nonlinear}, then similar models are presented in \cite{bando1995dynamical,chandler1958traffic}. The mathematical tools needed in this framework are not systems of ODEs anymore, but systems of delay differential equations (DDEs), particular differential equations in which the derivative of the unknown function at a certain time is given in terms of the values of the function at previous times.
Macroscopic models can be derived from microscopic description following a well-known procedure described in \cite{aw2002SIAP,colombo2014micro,difrancesco2017MBE}.
Depending on how to treat the delay term, one can recover different macroscopic models, as in \cite{tordeux2018traffic}, in which a Taylor's approximation is applied to the delay term and the obtained model is a diffusive LWR type model. 
On the other hand, we want to keep the delay in the explicit form, and therefore avoid the diffusion approximation. The model derived in \cite{burger2018derivation} will be studied in details in the following, investigating carefully its theoretical and numerical features. 

Several delayed-systems are presented in literature, since many phenomena need some transient to become visible or effective: the study of the evolution of the HIV in medicine \cite{culshaw2000delay,smith2011introduction}, cell population dynamics in biology \cite{green1981diffusion,rey1993multistability}, the feedback control loops in control engineering \cite{kwon1980feedback}, and many applications in mechanics and economics \cite{bianca2013time}, but to the authors best knowledge, they are closer to delayed parabolic partial differential equations or to delayed ordinary differential equations, i.e.\ they are studied only at a microscopic level.

In this work instead we deal with a delayed hyperbolic partial differential equation. We will point out similarities and differences with the undelayed model in order to catch the effect of the delay on traffic dynamics, both from theoretical and numerical points of view.
Moreover, since we are interested in reproducing real traffic phenomena, the numerical tests are mainly focused on traffic instabilities.
In particular we investigate the phenomenon of Stop \& Go (S\&G) waves, which are a typical feature of congested traffic and represent a real danger for drivers. 
They lead not only to safety hazard, but they also have a negative impact on fuel consumption and pollution.
Indeed a S\&G wave is detected when vehicles stop and restart without any apparent reason, generating a wave that travels backward with respect to the cars' trajectories.
Since modeling properly this phenomenon is crucial for developing techniques
aimed at reducing it, a considerable literature is growing up on this topic. This means that a lot of models have been developed in the last years, i.e. \cite{borsche2018nonlinear,flynn2009PRE,herty2020bgk,hoogendoorn2014ITS,tordeux2018traffic}, 
and also several real experiments took place, just see \cite{piccoli2018TRC,zhao2017TRB}.

In this framework, our aim is to investigate if our delayed model is able to capture the S\&G phenomena and, therefore, to present an easy to use algorithm able to reproduce S\&G waves at a macroscopic level. 
Indeed from the numerical point of view, just an altered Lax Friedrichs method will be employed to compute the evolution of the density.
In order to validate our model, several numerical tests will be provided for
comparing our delayed model with the existing ones.

\subsubsection*{Paper organization}
In Section 2, we introduce the delayed model and investigate its theoretical properties, as the conservation of mass, the positivity and the boundedness of the solution. After that, we focus on the numerical aspects, presented in Section 3, proposing a suitable numerical scheme and checking the theoretical features still hold. Section 4 is completely devoted to the numerical tests.

\section{The delayed traffic flow model}

In macroscopic models \cite{garavello2006traffic}, traffic is described in terms of macroscopic variables such as density $\rho=\rho(x,t)$, that is the number of vehicles in a kilometre, and the mean velocity $V=V(x,t)$ at the point $x\in \R$ at time $t>0$. 

The LWR model, introduced by Lighthill, Whitham \cite{lighthill1955RSL} and Richards \cite{richards1956OR}, is one of
the oldest and still most relevant first order macroscopic models for traffic flow.
The natural assumption that the total mass is conserved along the road is closed by the assumption that the velocity $V=V(\rho)$ is given as function of the density $\rho$:
\begin{equation}\label{LWR}
\begin{cases}
\partial_t\rho(x,t)+\partial_x(\rho(x,t) \ V(\rho(x,t)))=0\\
\rho(x,0)=\rho^0(x).
\end{cases}
\end{equation}
A lot of possible choices for the function $V(\rho)$ are available in the literature, i.e.\ the Greenshields function \cite{greenshields1935study} which proposes a linear velocity function:
\begin{equation}
    V(\rho)=\rho_{max}\Big( 1- \frac{\rho}{\rho_{max}}\Big).
\end{equation}
In order to simplify the notation, we will consider the normalized quantities  $\rho_{max}=V_{max}=1$.
%
%
%
%
Aiming to overcome the drawbacks of LWR model presented in the introduction, we propose a first order macroscopic model with a time delay term in the flux function, for taking into account that the velocity can not change instantaneously.

In this framework the delay represents the reaction time of both drivers and vehicles.
Such a model has been recovered from a delayed microscopic model, as shown in \cite{burger2018derivation} keeping the delay in the explicit form. Assuming $T\ge0$ as the time delay, we consider:
\begin{equation}\label{eq:DLWR}
    \partial_t \rho(x,t) + \partial_x \left(\rho(x,t) \ V(\rho(x,t-T))\right)=0.
\end{equation}
We will call this model delayed LWR model. Note that in the limit case of $T=0$ the classical LWR model is recovered and therefore, it can be interpreted as a generalization of the LWR model. On the other hand,
if the delay is too large and there are suitable initial conditions, cars can overtake or crash each other, for example when a vehicle suddenly
brakes and the following car is not reacting in time to slow down.

Note that in order to guarantee the well-posedness of the problem, we have to provide an initial history function as initial data defined on $[-T,0]$, thus we need $\rho^0(x,t)$ defined on $t\in [-T,0]$ when starting at $t=0$.

\subsection{Properties of the model}
After introducing the delayed model, we want to investigate its properties. Since this model can be seen as a generalization of the classical LWR model, i.e. when $T=0$, it is natural to investigate how its properties differ from
the undelayed model.

\subsubsection{Conservation of mass}
In  the  framework  of  conservation  laws  and  traffic flow models the conservation of the total mass is a crucial property which has to be guaranteed. 
For the LWR model \eqref{LWR}, we have one equation and one
conserved quantity, i.e.\ $\rho$. Indeed cars do not appear or disappear, they can only enter and leave at the boundaries.
We note that the structure of the equation in the delayed and undelayed model stays
the same and that we have a flux function that is now dependent on two variables.

\begin{lemma}
The delayed LWR model \eqref{eq:DLWR} conserves the quantity $\rho(x,t)$.
\end{lemma}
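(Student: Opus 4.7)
The plan is to exploit the fact that equation \eqref{eq:DLWR} is still in divergence (conservative) form $\partial_t \rho + \partial_x F = 0$, where now the flux $F(x,t) = \rho(x,t)\,V(\rho(x,t-T))$ merely depends on the delayed density through its velocity factor. The delay enters only through the functional argument of $V$ and does not break the spatial derivative structure, so a standard integrate-in-$x$ argument should go through essentially unchanged from the undelayed LWR case.

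Concretely, I would integrate both sides of \eqref{eq:DLWR} in $x$ over $\mathbb{R}$ (or the spatial domain of interest), interchange $\partial_t$ with the integral under standard regularity assumptions, and apply the fundamental theorem of calculus to the flux divergence:
\begin{equation*}
\frac{d}{dt} \int_{\mathbb{R}} \rho(x,t)\, dx \;=\; -\int_{\mathbb{R}} \partial_x\!\left(\rho(x,t)\,V(\rho(x,t-T))\right) dx \;=\; -\bigl[\rho(x,t)\,V(\rho(x,t-T))\bigr]_{x=-\infty}^{x=+\infty}.
\end{equation*}
Assuming that $\rho$ has compact support or decays sufficiently at infinity (or, in a bounded-domain formulation, periodic boundaries), the right-hand side vanishes and the total mass $\int \rho(x,t)\,dx$ is constant in time. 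On a bounded interval $[a,b]$ with inflow/outflow data, the same computation yields the standard flux balance $\frac{d}{dt}\int_a^b \rho\, dx = F(a,t)-F(b,t)$, i.e.\ mass can only enter or leave through the boundary.

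There is really no main obstacle here: the only point worth emphasizing is that even though $F(x,t)$ now involves the history $\rho(\cdot,t-T)$, this history is fixed data once the initial profile on $[-T,0]$ is prescribed, and the equation is still a spatial divergence at each fixed time $t$. Hence the argument reduces to the classical one, and the qualitative statement is unaffected by the introduction of the delay.
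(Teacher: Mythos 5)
Your proposal is correct and follows essentially the same route as the paper: integrate the equation in space, use the fundamental theorem of calculus on the divergence-form flux $\rho(x,t)V(\rho(x,t-T))$, and conclude that mass changes only through the boundary fluxes (the paper works on an arbitrary interval $[a,b]$, which is your second case). Your remark that the delay does not disturb the spatial divergence structure is exactly the point the paper is making.
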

\begin{proof}
	We integrate the equation \eqref{eq:DLWR} over an arbitrary space interval $[a,b]$ and get
	\begin{align*}
	\frac{d}{dt}\int_{a}^{b}\rho(x,t)dx&=-\int_{a}^{b}\partial_x(\rho(x,t)V(\rho(x,t-T)))dx\\
	&=\rho(a,t)V(\rho(a,t-T))-\rho(b,t)V(\rho(b,t-T)).
	\end{align*}
	Since $\int_{a}^{b}\rho(x,t)dx$ is the amount of density in the interval $[a,b]$, $\frac{d}{dt}\int_{a}^{b}\rho(x,t)dx$ denotes the change over time for the density. Therefore, the density only changes due to the flux at the boundaries $a$ and $b$ for every space interval.
\end{proof}

We see that the density is still conserved in the delayed model, which is very important for its reliability. The introduction of an explicit time delay therefore does not destroy this property.

\subsubsection{Positivity}
Another property one would ensure is the positivity of the solution. Indeed we want the density to stay positive, as negative densities have no physical meaning. 
\begin{lemma} \label{lem_pos}
	Assume we have initial data with non-negative density $\rho$. For the delayed LWR model, then the density stays non-negative.
\end{lemma}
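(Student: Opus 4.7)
The plan is to exploit the fact that the non-local coupling in \eqref{eq:DLWR} is only backwards in time and therefore to argue by the \emph{method of steps}. On the first slab $t\in[0,T]$ the coefficient $a(x,t):=V(\rho(x,t-T))$ is completely determined by the prescribed initial history $\rho^0$, so the delayed equation reduces on that slab to the linear conservation law
\[
\partial_t \rho + \partial_x\bigl(a(x,t)\,\rho\bigr)=0,\qquad \rho(x,0)=\rho^0(x,0),
\]
whose coefficient $a(x,t)$ inherits the sign and regularity of $V\circ\rho^0$.

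First I would rewrite this linear equation in non-conservative form, $\partial_t \rho + a(x,t)\,\partial_x\rho = -\bigl(\partial_x a(x,t)\bigr)\rho$, and apply the method of characteristics. Along the curves $\dot x(t)=a(x(t),t)$ the density satisfies the ODE $\dot\rho = -\bigl(\partial_x a(x(t),t)\bigr)\rho$, which integrates to
\[
\rho(x(t),t)=\rho(x(0),0)\,\exp\!\left(-\int_{0}^{t}\partial_x a(x(s),s)\,ds\right).
\]
Since the exponential factor is strictly positive, the hypothesis $\rho^0(\cdot,0)\geq 0$ propagates to $\rho(\cdot,t)\geq 0$ for every $t\in[0,T]$. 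Equivalently, one may observe that the flux $\rho\,a(x,t)$ vanishes identically on $\{\rho=0\}$, so the half-space $\{\rho\ge 0\}$ is invariant under the transport.

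Next I would iterate this argument: on $[T,2T]$ the coefficient $V(\rho(x,t-T))$ now depends on the solution built on $[0,T]$, which has just been shown to be non-negative, so exactly the same reasoning applies. An induction over the slabs $[nT,(n+1)T]$ yields $\rho(\cdot,t)\geq 0$ for all $t\ge 0$.

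The main obstacle is a regularity one rather than a structural one: the characteristic representation above assumes that $a(x,t)$ is at least Lipschitz in $x$, which in turn requires $\rho(x,t-T)$ to be smooth enough to differentiate. In the weak/entropy setting this would have to be replaced by a maximum-principle argument for linear scalar conservation laws with bounded, measurable coefficient (here $a=V(\rho)\in[0,1]$ under the Greenshields closure), or, as is consistent with the emphasis announced in the introduction, postponed to the discrete level where the monotone upwind/Lax--Friedrichs scheme of Section~3 makes positivity preservation transparent.
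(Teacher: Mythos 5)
Your proof is correct at the same level of formality as the paper's, but it takes a genuinely different route. The paper argues pointwise at a first touching point: writing the equation in non-conservative form, $\partial_t\rho = -\rho\,\partial_x V(\rho(\cdot,\cdot-T)) - V(\rho(\cdot,\cdot-T))\,\partial_x\rho$, it observes that at a point where $\rho$ first reaches $0$ the value $\rho(x^*,t^*)=0$ kills the first term and the fact that $x^*$ is a spatial minimum (so $\partial_x\rho(x^*,t^*)=0$, when the derivative exists) kills the second, hence $\partial_t\rho(x^*,t^*)=0$ and the density cannot cross zero. You instead exploit the method of steps: on each slab $[nT,(n+1)T]$ the delayed coefficient $a=V(\rho(\cdot,\cdot-T))$ is already known, the equation becomes a linear continuity equation, and the characteristic representation $\rho(x(t),t)=\rho(x(0),0)\exp(-\int_0^t\partial_x a\,ds)$ manifestly preserves the sign. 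Both arguments rest on the same structural fact -- the flux is proportional to $\rho$ and the delay enters only through the coefficient -- and both implicitly assume smoothness ($\partial_x\rho$ at the touching point for the paper, Lipschitz continuity of $a$ in $x$ for your characteristics). Your version buys more: an explicit solution formula on each slab, a clean induction giving non-negativity for all $t\ge 0$ rather than a local barrier heuristic, and an honest identification of the regularity gap together with the correct fallback (a maximum principle for linear transport with bounded measurable coefficients, or the discrete positivity proof of Section~3, which is indeed where the paper does the rigorous work). The paper's version buys brevity and avoids any appeal to well-posedness of the slab problems. Either argument would be acceptable here; neither is fully rigorous in the weak-solution setting.
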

\begin{proof}
	We rewrite \eqref{eq:DLWR} as
	$$\partial_{t}\rho(x,t)=-\left(\rho(x,t)\partial_xV(\rho(x,t-T))+V(\rho(x,t-T))\partial_x\rho(x,t)\right).$$
	For the density to become negative, we need to have $\rho=0$ and $\partial_t\rho<0$. Hence, assume $\rho(x^*,t^*)=0$ at an arbitrary point $(x^*,t^*)$. This means, due to our assumption, that $\rho(x^*,t^*)$ is a minimum, since the density is positive for $t\le t^*$ and therefore, if the derivative exists, $\partial_x\rho(x^*,t^*)=0$. Plugging in then gives us
	$$\partial_t\rho(x^*,t^*)=-\big(0\ \partial_xV(\rho(x^*,t^*-T))+V(\rho(x^*,t^*-T))\ 0\big)=0.$$
	We have therefore shown that $\rho$ can not become negative.
\end{proof}
\begin{remark}
	The velocity $V$ in this model is a function of $\rho$ and can be chosen and altered depending on the needs. The properties regarding the velocity in the first order model can therefore be acquired by choosing a suitable function $V$.  For example, we can have lower and upper bounds for the velocity by defining $V$ to be cut at the bounds.
\end{remark}

\subsubsection{Upper bound}

The last property we want to investigate is the boundedness of the solution. In particular, we want to know if there is a maximal density. For the undelayed model, this is guaranteed. For the delayed model, we need to check if this still true. 
\begin{lemma}
	Assume $V$ is monotone decreasing and $V(\rho_{\text{max}})=0$ for $\rho_{\text{max}},$ the maximal density in the classical LWR model. The delayed first order model \eqref{eq:DLWR} has no maximal density $\rho_{{max}}$.
\end{lemma}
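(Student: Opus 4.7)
The plan is to establish the lemma by explicit counterexample: to construct an initial history $\rho^0$ on $[-T,0]$ for which the solution of \eqref{eq:DLWR} strictly exceeds $\rho_{\text{max}}$ for some $t>0$. The self-limiting mechanism of the classical LWR model relies on $V(\rho_{\text{max}})=0$: once the density reaches the jam value the flux vanishes and further accumulation is blocked. In the delayed equation the flux is $\rho(x,t)\,V(\rho(x,t-T))$, with the velocity evaluated at the density $T$ time units earlier; therefore even when $\rho(x,t)=\rho_{\text{max}}$ the advection speed can remain strictly positive, and mass continues to pile in.

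To exploit this rigorously, I would, as in the proof of Lemma~\ref{lem_pos}, rewrite the PDE via the product rule,
$$\partial_t \rho(x,t) = -V(\rho(x,t-T))\,\partial_x \rho(x,t) - \rho(x,t)\,V'(\rho(x,t-T))\,\partial_x \rho(x,t-T).$$
At any point $(x^*, t^*)$ where $\rho(\cdot, t^*)$ attains a local spatial maximum, $\partial_x\rho(x^*,t^*)=0$, so the first term vanishes and
$$\partial_t \rho(x^*, t^*) = -\rho(x^*, t^*)\,V'(\rho(x^*, t^*-T))\,\partial_x \rho(x^*, t^*-T).$$
Because $V$ is monotone decreasing ($V'\le 0$), this is strictly positive whenever $\rho(x^*, t^*)>0$ and $\partial_x \rho(x^*, t^*-T)>0$, i.e.\ whenever the spatial profile was strictly increasing at $x^*$ one delay earlier. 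The task then reduces to producing an initial history for which such a point arises with $\rho(x^*, t^*)=\rho_{\text{max}}$.

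A clean choice is a smooth $\rho^0(x,t)$ on $[-T,0]$ with $\rho^0(\cdot,0)=\rho_{\text{max}}\, e^{-x^2}$ (a bump of height $\rho_{\text{max}}$ centred at the origin) and $\rho^0(\cdot,-T)=\tfrac{\rho_{\text{max}}}{2}(1+\tanh x)$ (strictly increasing, values bounded below $\rho_{\text{max}}$), joined by any smooth interpolation in $t$. Applied at $(0,0)$ the identity above gives $\partial_t \rho(0,0) = -\tfrac{\rho_{\text{max}}^2}{2}\,V'(\rho_{\text{max}}/2) > 0$, so the solution exceeds $\rho_{\text{max}}$ for $t>0$ sufficiently small. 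The main subtlety is not the maximum-principle computation but verifying that the prescribed history actually yields a solution to which the pointwise identity applies at $t=0^+$; since \eqref{eq:DLWR} constrains $\rho$ only for $t>0$, the history may be prescribed with mild regularity and this is a routine check. A technically cleaner alternative is a piecewise-constant Riemann history $\rho^0=\rho_L \mathbf{1}_{x<0}+\rho_R \mathbf{1}_{x>0}$ with $\rho_L<\rho_R<\rho_{\text{max}}$: on $(0,T)$ the delayed equation becomes a linear transport equation with discontinuous, time-independent coefficient, and the Rankine--Hugoniot flux-continuity condition at the stationary interface forces a downstream density $\rho_L V(\rho_L)/V(\rho_R)$, which becomes arbitrarily large as $\rho_R\to\rho_{\text{max}}$.
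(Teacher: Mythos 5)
Your proposal is correct and, in its first half, coincides exactly with the paper's argument: both rewrite \eqref{eq:DLWR} by the product rule, evaluate at a point where $\rho$ attains $\rho_{\text{max}}$ so that $\partial_x\rho(x^*,t^*)=0$, and observe that the surviving term $-\rho(x^*,t^*)\,\partial_x V(\rho(x^*,t^*-T))$ involves the spatial slope of the \emph{delayed} profile, whose sign is no longer controlled by the fact that $\rho(x^*,t^*)$ is maximal. Where you genuinely diverge is in what you do with this observation. The paper stops there and concludes that ``$\rho>\rho_{\text{max}}$ is possible'' --- i.e.\ it shows only that the classical maximum-principle argument breaks down, without exhibiting a solution that actually overshoots. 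You instead construct an explicit history ($\rho^0(\cdot,0)$ a bump of height $\rho_{\text{max}}$, $\rho^0(\cdot,-T)$ strictly increasing and bounded away from $\rho_{\text{max}}$) for which $\partial_t\rho(0,0)=-\tfrac{\rho_{\text{max}}^2}{2}V'(\rho_{\text{max}}/2)>0$, so the density strictly exceeds $\rho_{\text{max}}$ for small positive time. This is the stronger and, strictly speaking, the logically complete version of the lemma: a statement of the form ``there is no maximal density'' is proved by counterexample, not by noting that one proof strategy fails. Your remark that the well-posedness check is routine is also well taken, since on $(0,T)$ the delayed equation reduces to a linear transport equation with the known smooth coefficient $V(\rho^0(x,t-T))$, so a classical short-time solution exists and the pointwise identity is justified. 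Two small caveats: you need $V$ to be \emph{strictly} decreasing at $\rho_{\text{max}}/2$ (monotone decreasing alone permits $V'=0$ there), which holds for Greenshields but should be stated; and the Riemann-history alternative, while a nice heuristic, involves linear transport across a decreasing discontinuous coefficient, where mass concentration and delta shocks make the Rankine--Hugoniot bookkeeping delicate --- the smooth construction is the one to keep.
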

\begin{proof}
	Assume we have a maximal density $\rho_{{max}}$. The velocity function $V$ is chosen in such a way that $V(\rho_{{max}})=0$ and monotone decreasing. Then, for an arbitrary point $(x^*,t^*)$ where $\rho(x^*,t^*)=\rho_{{max}}$ we have
	$$\partial_t \rho(x^*,t^*)=-\rho(x^*,t^*)\partial_xV(\rho(x^*,t^*-T))-V(\rho(x^*,t^*-T))\partial_x\rho(x^*,t^*).$$
	Since $\rho(x^*,t^*)$ is the maximal density, $\partial_x\rho(x^*,t^*)=0$ if the derivative exists and we have
	$$\partial_t \rho(x^*,t^*)=-\rho(x^*,t^*)\partial_xV(\rho(x^*,t^*-T))$$
	left. We know $\rho(x^*,t^*)\geq0$ and this means that the sign of $\partial_t\rho(x^*,t^*)$ is only dependent on $\partial_xV(\rho(x^*,t^*-T))$.
	
	In the undelayed case, we know that $\partial_xV(\rho(x^*,t^*))>0$, since $V$ is monotone decreasing and $\rho(x^*,t^*)$ is the maximal $\rho$. 
	
	In the delayed case, we do not have knowledge if $\rho(x^*,t^*-T)$ is maximal, so we can in general say nothing about $\partial_xV(\rho(x^*,t^*-T))$. This means, in general, $\rho>\rho_{{max}}$ is possible.
\end{proof}
\begin{remark}
	Regarding the positivity, we claimed that the choice of $V$ in the first order model is a key to guarantee a positive velocity. We here see, due to the fact that $\rho$ overshoots any $\rho_{{max}}$, that the classical choices for $V$ must be altered to avoid negative velocities, i.e. we need to cut the function.
\end{remark}
\begin{remark}
If the density $\rho>\rho_{{max}}$, the model is not reliable any more. On the other hand this situation could not be avoided since rear-end collisions are actually possible in real situations.
\end{remark}

\section{Numerical discretization}

After the investigations on the analytical properties of the delayed LWR model, let us focus on its numerical counterpart.

Since \eqref{eq:DLWR} is a hyperbolic partial differential equation, we can employ the Lax-Friedrichs method for the numerical approximation.
To do that, we first introduce space and time steps $\Delta x$, $\Delta t >0$  and a grid in space $\{x_i= i \Dx , i \in \mathbb{Z}\}$ and time $\{t^n= n \Dt , n \in \mathbb{N}\}$.
Discretized variables are expressed by $\rho_i^n$, where $i$ is the space and $n$ the time index.  Also, we have $\Delta t \leq T$ to be able to treat the delay.

The Lax-Friedrichs method for \eqref{eq:DLWR} is stated by:
$$\rho_i^{n+1}=\frac{1}{2}(\rho_{i+1}^n+\rho_{i-1}^n)-\frac{\Delta t}{2\Delta x}(f(\rho_{i+1}^n)-f(\rho_{i-1}^n)).$$
Now using the structure of \eqref{eq:DLWR}, we can identify a flux function $f(\rho(x,t-T),\rho(x,t))=V(\rho(x,t-T))\rho(x,t)$. Plugging this into the Lax-Friedrichs method, we end up with an \textit{altered Lax-Friedrichs} method
\begin{equation}
\label{AlteredLF}
\rho_i^{n+1}=\frac{1}{2}(\rho_{i+1}^n+\rho_{i-1}^n)-\frac{\Delta t}{2\Delta x}(f(\rho_{i+1}^{n-T_{\Delta}},\rho_{i+1}^{n})-f(\rho_{i-1}^{n-T_{\Delta}},\rho_{i-1}^{n})),
\end{equation}
where $T_{\Delta}$ is the number of steps that make up the time delay $T$.
In order to guarantee the well-posedness of the discrete problem, we have to provide an initial history function as initial data defined on $[-T,0]$, as we said above for the continuous problem. The simplest choice one can do is to consider $\rho^0(x,t)$ as a constant function on $t\in [-T,0]$ when starting at $t=0$. In the following we will assume that $\rho^0(x,0)$ is constant in $t$ for $t\in [-T,0]$.
The Lax-Friedrichs method has a CFL condition in the classical case, which is given as $\Delta t\leq\frac{\Delta x}{\max_k(\lambda_k)}$, where $\lambda_k$ are the eigenvalues of the jacobian matrix of $f$. We also expect to find a CFL condition in the delayed case, but a priori it is not clear how this condition may look like. In the following, we want to investigate some properties of this method, and in this process we will find an appropriate CFL condition. For the sake of the calculations, we assume the velocity function $V$ to be the Greenshields function, or a cut variation of it, where we have $|V(\rho)|\leq|\rho_{\text{max}}|$ with $V_{max}=1$.

\subsection{Properties of the discretization}

\subsubsection{Conservation of mass}

First, we check if the conservation property is preserved from the numerical scheme. Here, we assume the density to be on a compact support, so we do not have infinite density initially.
We get
\begin{equation}
\label{Conservation}
\Delta x\sum_j \rho^{n+1}_j=\Delta x\sum_j \frac{1}{2}(\rho_{j+1}^{n}+\rho_{j-1}^{n})-\frac{\Delta t}{2\Delta x}(V(\rho_{j+1}^{n-T_{\Delta}})\rho_{j+1}^{n}-V(\rho_{j-1}^{n-T_{\Delta}})\rho_{j-1}^{n}),
\end{equation}
where the part $$\sum_j \frac{1}{2}(\rho_{j+1}^{n}+\rho_{j-1}^{n})=\sum_j \rho_j^n$$ and the part $$\sum_j -\frac{\Delta t}{2\Delta x}(V(\rho_{j+1}^{n-T_{\Delta}})\rho_{j+1}^{n}-V(\rho_{j-1}^{n-T_{\Delta}})\rho_{j-1}^{n})$$ is a telescope sum and equals zero due to the compact support. This gives us $$\Delta x\sum_j \rho^{n+1}_j=\Delta x\sum_j \rho^{n}_j$$ and therefore conservation.

\subsubsection{Positivity}

We show that Lemma \ref{lem_pos} holds also at the discrete level under a certain CFL condition. Starting with \eqref{AlteredLF}, we see
that $\frac{1}{2}(\rho_{j+1}^n+\rho_{j-1}^n)$ is always positive, since we assume $\rho^n$ to be positive. W.l.o.g. we can even say it is bigger than $0$, since if it is $0$, $\rho_{j}^{n+1}$ will also be zero. So to guarantee positivity, we need to guarantee
$$\frac{\Delta t}{2\Delta x}(V(\rho_{j+1}^{n-T_{\Delta}})\rho_{j+1}^{n}-V(\rho_{j-1}^{n-T_{\Delta}})\rho_{j-1}^{n})\leq\frac{1}{2}(\rho_{j+1}^{n}+\rho_{j-1}^{n}).$$
We introduce a CFL-condition, namely $\Delta t_n\leq\frac{\Delta x}{\max\{|\rho^n|,|\rho^{n-T_{\Delta}}|\}}$. Therefore, we get for the left-hand-side
\begin{align}
\label{Positivity}
&\frac{1}{2\max\{|\rho^n|,|\rho^{n-T_{\Delta}}|\}}(V(\rho_{j+1}^{n-T_{\Delta}})\rho_{j+1}^{n}-V(\rho_{j-1}^{n-T_{\Delta}})\rho_{j-1}^{n})\nonumber\\
\leq&\frac{V(\rho_{j+1}^{n-T_{\Delta}})}{2\max\{|\rho^n|,|\rho^{n-T_{\Delta}}|\}}\rho_{j+1}^{n}-\frac{V(\rho_{j-1}^{n-T_{\Delta}})}{2\max\{|\rho^n|,|\rho^{n-T_{\Delta}}|\}}\rho_{j-1}^{n}\nonumber\\
\leq&\frac{\max\{|\rho^n|,|\rho^{n-T_{\Delta}}|\}}{2\max\{|\rho^n|,|\rho^{n-T_{\Delta}}|\}}\rho_{j+1}^{n}+\frac{\max\{|\rho^n|,|\rho^{n-T_{\Delta}}|\}}{2\max\{|\rho^n|,|\rho^{n-T_{\Delta}}|\}}\rho_{j-1}^{n}\nonumber\\
=&\frac{1}{2}(\rho_{j+1}^{n}+\rho_{j-1}^{n}),
\end{align}
which shows the positivity for this CFL-condition. Here we use that $|V(\rho)|\leq\max\{|\rho^n|,|\rho^{n-T_{\Delta}}|\}$ if $\rho$ is positive.

\subsubsection{$L^\infty$-Bound}

Focusing on the boundedness of the discrete solution, we look for an estimate in the norm $|\cdot|_{L^{\infty}}$. Since we have positivity, only an upper bound for $\rho$ in \eqref{AlteredLF} is required. We assume that the data at time $t_n$ has an upper bound which we denote with $\rho^n_{\max}$. We further denote $V(\rho_j^{n-T_{\Delta}})\rho_j^n=f(\rho_j^{n-T_{\Delta}},\rho_j^n)$ and get for a special $\xi=(\xi_1,\xi_2)$ using the mean value theorem:
\begin{align}
\label{L-Bound}
\rho_j^{n+1}&=\frac{1}{2}(\rho_{j+1}^{n}+\rho_{j-1}^{n})-\frac{\Delta t}{2\Delta x}(f(\rho_{j+1}^{n-T_{\Delta}},\rho_{j+1}^{n})-f(\rho_{j-1}^{n-T_{\Delta}},\rho_{j-1})^{n})\nonumber\\
&= \frac{1}{2}(\rho_{j+1}^{n}+\rho_{j-1}^{n})+\frac{\Delta t}{2\Delta x}(\nabla f(\xi_1,\xi_2)\begin{pmatrix}\rho_{j-1}^n-\rho_{j+1}^n\\\rho_{j-1}^{n-T_{\Delta}}-\rho_{j+1}^{n-T_{\Delta}}\end{pmatrix})\nonumber\\
&=\frac{1}{2}(\rho_{j+1}^{n}+\rho_{j-1}^{n})+\frac{\Delta t}{2\Delta x}(V(\xi_2)(\rho_{j-1}^n-\rho_{j+1}^n)+\xi_1(\rho_{j+1}^{n-T_{\Delta}}-\rho_{j-1}^{n-T_{\Delta}}))\nonumber\\
&\leq\frac{1}{2}(\rho_{j+1}^{n}+\rho_{j-1}^{n})+\frac{1}{2}(\rho_{j-1}^n-\rho_{j-1}^{n-T_{\Delta}}+\rho_{j+1}^{n-T_{\Delta}}-\rho_{j+1}^n)\nonumber\\
&=\frac{1}{2}(2\rho_{j-1}^n-\rho_{j-1}^{n-T_{\Delta}}+\rho_{j+1}^{n-T_{\Delta}})\nonumber\\
&\leq\rho_{\max}^n+\frac{1}{2}(\rho_{j+1}^{n-T_{\Delta}}-\rho_{j-1}^{n-T_{\Delta}})\nonumber\\
&\leq\frac{3}{2}\max\{|\rho^n|,|\rho^{n-T_{\Delta}}|\}
\end{align}

We use again the positivity and the CFL-condition.\newline
\begin{remark}
We can find a different estimate for $\rho^{n+1}_j$, depending on $T$. Note that the estimate in \eqref{L-Bound} is more accurate. 
We start in the same way as above, and have 
$$\rho_j^{n+1}\leq\frac{1}{2}(2\rho_{j-1}^n-\rho_{j-1}^{n-T_{\Delta}}+\rho_{j+1}^{n-T_{\Delta}}).$$ Rearranging gives us
$$\frac{1}{2}(\rho_{j-1}^n+\rho_{j-1}^n-\rho_{j-1}^{n-T_{\Delta}}+\rho_{j+1}^{n-T_{\Delta}})=\frac{1}{2}(\rho_{j-1}^n-\rho_{j-1}^{n-T_{\Delta}}+\rho_{j+1}^{n-T_{\Delta}}+\rho_{j-1}^n),$$
and by again using the mean value theorem for $\rho_{j-1}(t)$ we get
\begin{equation}
\rho_j^{n+1}\leq\frac{1}{2}(T\partial_t\rho_{j-1}+\rho_{j-1}^n+\rho_{j+1}^{n-T_{\Delta}})\leq\max\{|\rho^n|,|\rho^{n-T_{\Delta}}|\}+\frac{1}{2}T||\partial_t\rho||_{L^{\infty}}
\end{equation}

\end{remark}

\subsubsection{TV Bound}

Let us look for an estimate on the Total Variation for the method \eqref{AlteredLF}. The first thing we check is the difference between the velocity function in two cells.
\begin{equation}
\label{DiffVel}
V(\rho_{j+1}^{n-T_{\Delta}})-V(\rho_{j}^{n-T_{\Delta}})\leq(1-\rho_{j+1}^{n-T_{\Delta}})-(1-\rho_{j}^{n-T_{\Delta}})=\rho_{j}^{n-T_{\Delta}}-\rho_{j+1}^{n-T_{\Delta}}=-\Delta_{j+\frac{1}{2}}^{n-T_{\Delta}}
\end{equation}
Then, we look at the difference between two neighboring cells, where we use \eqref{AlteredLF}, \eqref{DiffVel} and the notation $\Delta_{j+\frac{1}{2}}^{n}=\rho_{j+1}^{n}-\rho_{j}^{n}$:
\begin{align}
\label{Diff}
\Delta_{j+\frac{1}{2}}^{n+1}&=\rho_{j+1}^{n+1}-\rho_{j}^{n+1}\nonumber\\
&=\frac{1}{2}(\Delta_{j+\frac{3}{2}}^{n}+\Delta_{j-\frac{1}{2}}^{n})\nonumber\\
&-\frac{\Delta t}{2\Delta x}(V(\rho_{j+2}^{n-T_{\Delta}})\Delta_{j+\frac{3}{2}}^{n}-V(\rho_{j}^{n-T_{\Delta}})\Delta_{j-\frac{1}{2}}^{n}-\rho_{j+1}^{n}\Delta_{j+\frac{3}{2}}^{n-T_{\Delta}}+\rho_{j-1}^{n}\Delta_{j-\frac{1}{2}}^{n-T_{\Delta}})
\end{align}
The Total Variation at time $t_{n+1}$ (denoted by $TV(\rho^{n+1}_{\Delta})$) is given as $\sum_j|\Delta_{j+\frac{1}{2}}^n|$. For the next step and with \eqref{Diff} as well as shifting the indices, we get 
\begin{align}
\sum_j|\Delta_{j+\frac{1}{2}}^{n+1}|&=\sum_j|\rho_{j+1}^{n+1}-\rho_{j}^{n+1}|\nonumber\\
=\sum_j(1&+2|\frac{\Delta t}{2\Delta x}\rho_{j+1}^{n-T_{\Delta}}|+2|\frac{\Delta t}{2\Delta x}\rho_{j+1}^{n}|)\max\{|\Delta_{j+\frac{1}{2}}^{n}|,|\Delta_{j+\frac{1}{2}}^{n-T_{\Delta}}|\}.
\end{align}
If we introduce a CFL-condition of the type $\Delta t\leq\frac{2\Delta x}{\max\{|\rho_j^{n-T_{\Delta}}|,|\rho_j^{n}|\}}$ (i.e.\ not as strong as above), the dependency on $j$ disappears and we get
$$(5+\frac{1}{\max\{|\rho_j^{n-T_{\Delta}}|,|\rho_j^{n}|\}})\sum_j\max\{|\Delta_{j+\frac{1}{2}}^{n}|,|\Delta_{j+\frac{1}{2}}^{n-T_{\Delta}}|\}.$$
By assumption, $TV(\rho_{\Delta}^n)$ and $TV(\rho_{\Delta}^{n-T_{\Delta}})$ are finite, so we can estimate further
\begin{equation}
TV(\rho_{\Delta}^{n+1})\leq2(5+\frac{1}{\max\{|\rho_j^{n-T_{\Delta}}|,|\rho_j^{n}|\}})\max\{TV(\rho_{\Delta}^n),TV(\rho_{\Delta}^{n-T_{\Delta}})\}.
\end{equation} 
For the estimate in time, we look at 
$$\sum_j|\rho_{j}^{n+1}-\rho_{j}^{n}|.$$
By simply plugging in \eqref{AlteredLF} and using the same CFL-condition, we can write
\begin{align*}
\sum_j|\rho_{j}^{n+1}-\rho_{j}^{n}|&\leq\sum_j\frac{1}{2}(|\rho_{j+1}^{n}|+|\rho_{j-1}^{n}|)+\frac{\Delta t}{2\Delta x}(|V(\rho_{j+1}^{n-T_{\Delta}})||\rho_{j+1}^{n}|+|V(\rho_{j-1}^{n-T_{\Delta}})||\rho_{j-1}^{n}|)+|\rho_j^n|\\
&\leq\sum_j2\max\{|\rho_j^{n-T_{\Delta}}|,|\rho_j^{n}|\}+2+2\max\{|\rho_j^{n-T_{\Delta}}|,|\rho_j^{n}|\}\\
&=\sum_j4\max\{|\rho_j^{n-T_{\Delta}}|,|\rho_j^{n}|\}+2.
\end{align*}
We now have a BV-Bound in space as well as in time, which gives us all the desired  BV estimates. 

\subsubsection{Time span}

With the CFL-condition we introduced, the time step can become smaller every step, since we have no maximum principle. Here, we want to see if we can actually reach every time horizon. Therefore, we plug our $L^{\infty}$-bound into the CFL and get
$$\Delta t\leq\frac{\Delta x}{\max\{|\rho^n|,|\rho^{n-T_{\Delta}}|\}}\leq\frac{\Delta x}{(\frac{3}{2})^n||\rho^0||_{L^{\infty}}}.$$
Now, the time horizon we reach with $n$ time steps is given by
$$t_n=\sum_{i=1}^n\Delta t=\frac{\Delta x}{||\rho^0||_{L^{\infty}}}\sum_{i=1}^n(\frac{2}{3})^i.$$
So for infinite time steps $n\to \infty$, 
we end up with a geometric series which converges to $3\frac{\Delta x}{||\rho^0||_{L^{\infty}}}$. That is the time horizon we can guarantee with this estimate. \newline
With the alternative estimate that depends on $T$, the time horizon we can reach is dependent also on $T$. With $n$ time steps, we can now reach the time horizon
$$t_n=\Delta x\sum_{i=1}^n\frac{1}{\rho^0_{\max}+i\frac{1}{2}||\partial_t\rho||T}.$$
This is basically a harmonic series shifted and with a factor, but it is divergent to $\infty$ if the factor is not zero. This also means, that for small $T$, the steps can be larger. Furthermore, with this second estimate, we can guarantee to reach every time horizon.

\section{Numerical results}
This section is devoted to the numerical simulation results for the model presented above, focusing in particular on the S\&G waves phenomenon, a typical feature of congested traffic, detected when vehicles stop and restart without any apparent reason, generating a wave that travels backward with respect to the cars' trajectories.

Starting from empirical observations and the work done in \cite{cristiani2019interface,zhao2017TRB}, let us assume the velocity function as follow: 
\begin{equation}\label{eq:vel_num}
V(\rho)= \begin{cases}
V_{max} & \qquad \rho\le \rho_f \\
\alpha\ ( \frac{1}{\rho} - \frac{1}{\rho_c} )  & \qquad \rho_f<\rho<\rho_c  \\
0 & \qquad \rho\ge \rho_c
\end{cases}
\end{equation}
where $\alpha>0$ is a parameter, $\rho_c\in (0,\rho_{max}]$ and $\rho_f \in [0,\rho_{max})$ are two density thresholds. In particular $\rho_c$ represents the so-called \textit{safe distance} at the macroscopic level: if the density is higher than $\rho_c$, vehicles do not respect the safe distance so the desired velocity has to be 0, indeed they should stop. 

On the other hand, if the density is very low, which means that vehicles are far enough from each others, the desired velocity is the maximum one.

Note that \eqref{eq:vel_num} respects the hypothesis $|V(\rho)|\le |\rho_{max}|$ choosing $V_{max}=1$. Moreover, depending on the choice made for $\alpha$, the velocity function can be discontinuous.

For the discretization, let us assume the space interval $[a,b]=[0,1]$, $\Dx=0.02$ and periodic boundary conditions. Moreover, the time step $\Dt$ is chosen in such a way the CFL condition is satisfied.
The density thresholds are $\rho_c=0.75$ and $\rho_f=0.2$ as real data suggests.
 The delay term depends on the CFL condition and the initial data, for this reason each numerical test has its time delay interval which ensures the reliability of the model, i.e. $\rho\le \rho_{max}$. However, in general, one has to assume $T_\Delta$ one order of magnitude greater than $\Dt$ to see the effect, i.e. $T_\Delta \approx (10\Dt,20\Dt)$.

\subsection{Backward propagation of Stop \& Go waves}
In order to be more comprehensive as possible in reproducing S\&G waves, let us describe first the backward propagation of the perturbation. After that we will focus also on the triggering of this phenomenon.

\subsubsection*{Test 0}

In order to point out the crucial role played by the delay in this framework, let us compare the evolution of the density obtained with the delayed model and the classical LWR model, or, in other words, when $T_\Delta=0$.

Assume as initial data $\rho^0(x)=\frac{5}{8}+\frac{1}{8}\sin{(2\pi x)}$. Moreover the time step is $\Dt=0.01$ and the delay $T_\Delta=15\Dt$ in the delayed case.

\begin{figure}[htb!]
	\centering
	\includegraphics[width=0.49\textwidth]{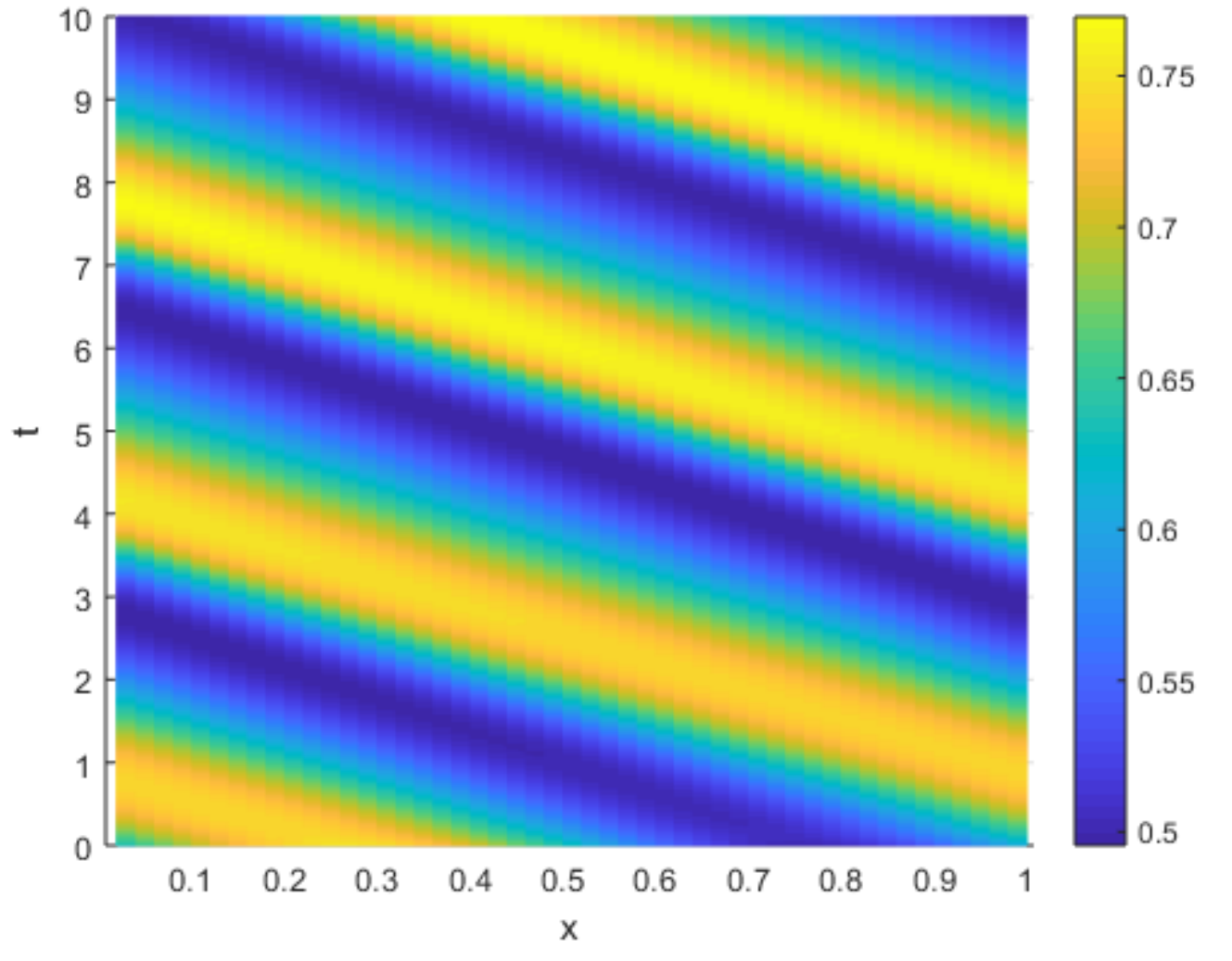}
	\includegraphics[width=0.49\textwidth]{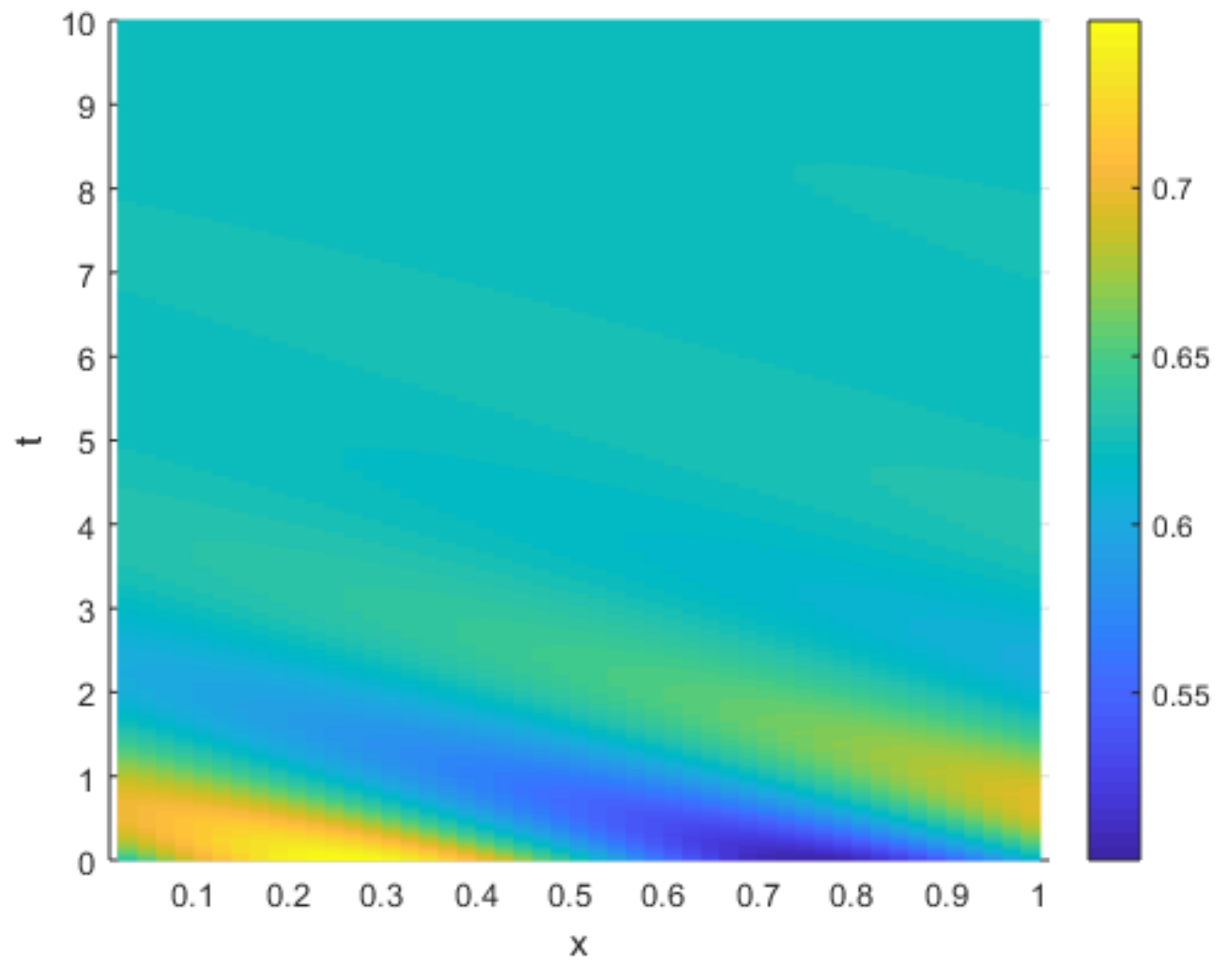}
		\caption{Comparing the density evolution computed by the delayed model (left) and the LWR model (right).}
		\label{fig:t0}
\end{figure}

It is evident how the LWR model smears out the perturbations in the initial data and after a certain time the density becomes constant on the whole road, see Fig.\ \ref{fig:t0}(right). On the other hand, the delayed model preserves the perturbations and also makes them increase as usually happens in traffic evolution, Fig.\ \ref{fig:t0}(left).

\begin{remark}
 We have to be very careful in choosing the delay term. Indeed if the delay is too small, we recover a situation very similar to the LWR model, but, on the other hand, if the delay is too high, i.e.\ $T_{\Delta}=18 \Dt$ , the hypothesis on the model are no longer satisfied and the density grows more than 1, so the model has no sense anymore, see Fig.\ \ref{fig:t0_high_del}. 

\begin{figure}[htb!]
	\centering
	\includegraphics[width=0.49\textwidth]{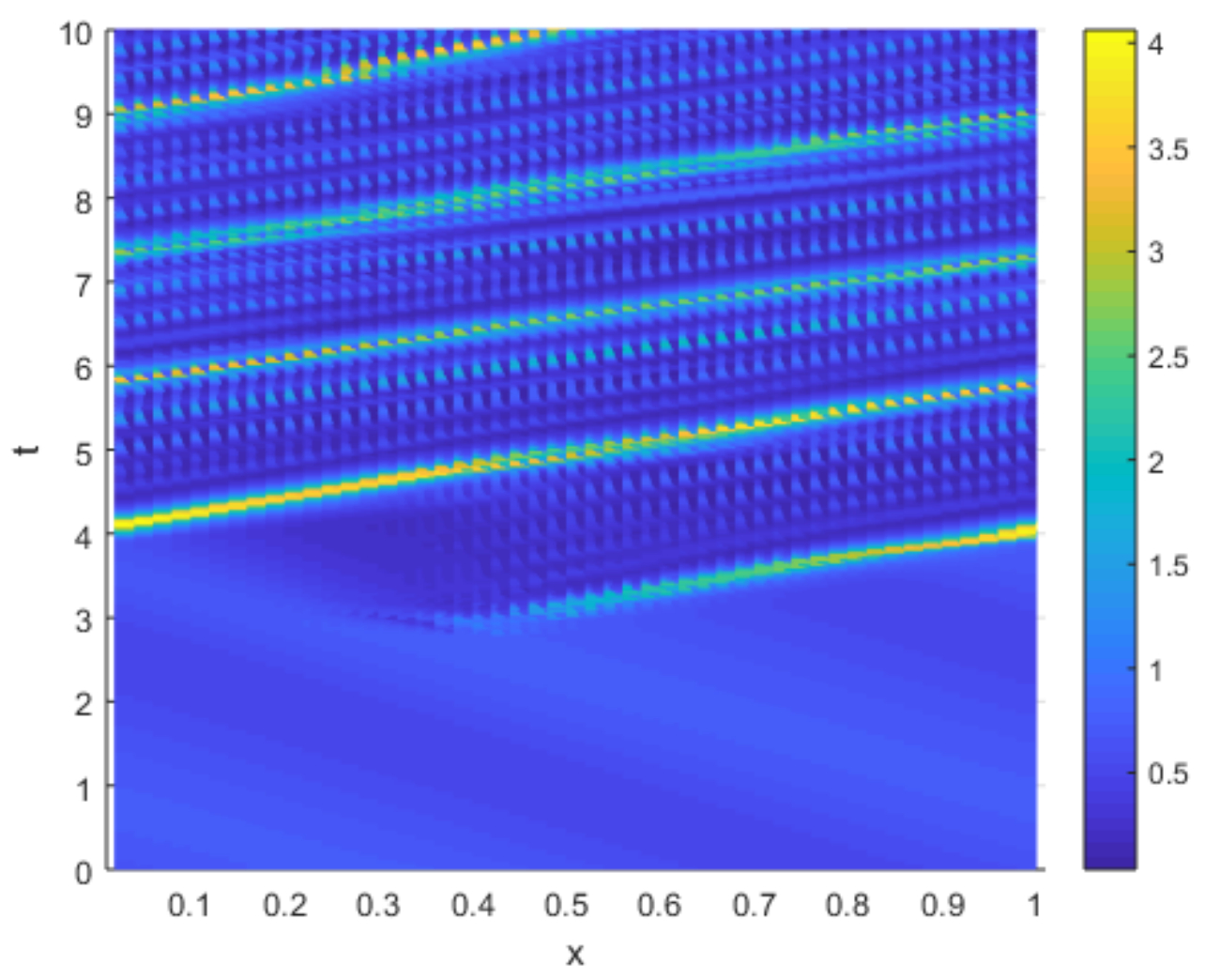}
	\includegraphics[width=0.48\textwidth]{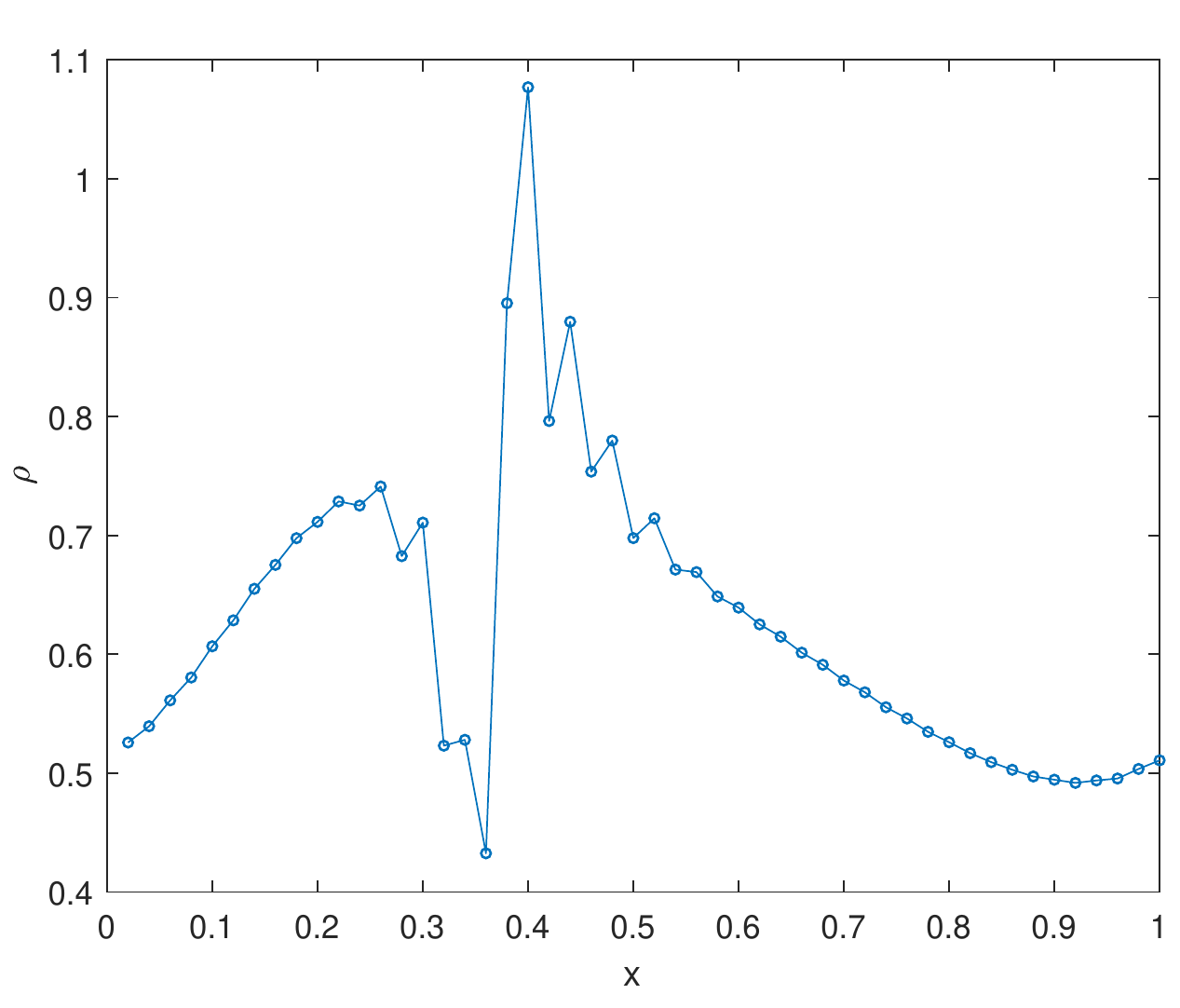}
		\caption{Density evolution and profile, at time $T=\frac{1}{3}T_f$, in case of a too high delay.}
		\label{fig:t0_high_del}
\end{figure}
In the following, we will consider the delay as the maximum allowed by the model feasibility. 
\end{remark}

\subsubsection*{Test 1}

In this numerical simulation we want to reproduce with our model the tests presented in \cite{borsche2018nonlinear}. Starting from the same initial data our aim is to recover a similar behaviour for the density. In \cite{borsche2018nonlinear}, a nonlinear 2-equations discrete velocity model is implemented to compute the density evolution:
\begin{align*}
\partial_t \rho + \partial_x q &=0\\
\partial_t q + \frac{Hq}{1-\rho}\partial_x \rho + \Big(1-\frac{Hq}{1-\rho}\partial_x q  \Big) &=-\frac{1}{\varepsilon}(q-F(\rho)),
\end{align*}
where $q$ is the flux, $F$ is the fundamental diagram and $H$ is a measure for the look ahead and the nonlocality of the equations. Such a model converges to the LWR type equations in the relaxation limit but shows also similarities to the Aw-Rascle model. The main difference to the delayed model is that at the macroscopic level they are composed of a system of two PDEs while the delayed model is described only by a conservation law with a time delay in the velocity term.

Let us assume $\rho^0(x,k)=\frac{5}{8}+\frac{1}{8}\sin{(2k\pi x)}$ for $k=1,2$ and $\Dt=0.01$. We consider $T_{\Delta}=16\Dt$ and $T_{\Delta}=22 \Dt,$ respectively if $k=1,2$.

\begin{figure}[htb!]
	\centering
	\includegraphics[width=0.49\textwidth]{T2_Bor_d1_del32.pdf}
	\includegraphics[width=0.49\textwidth]{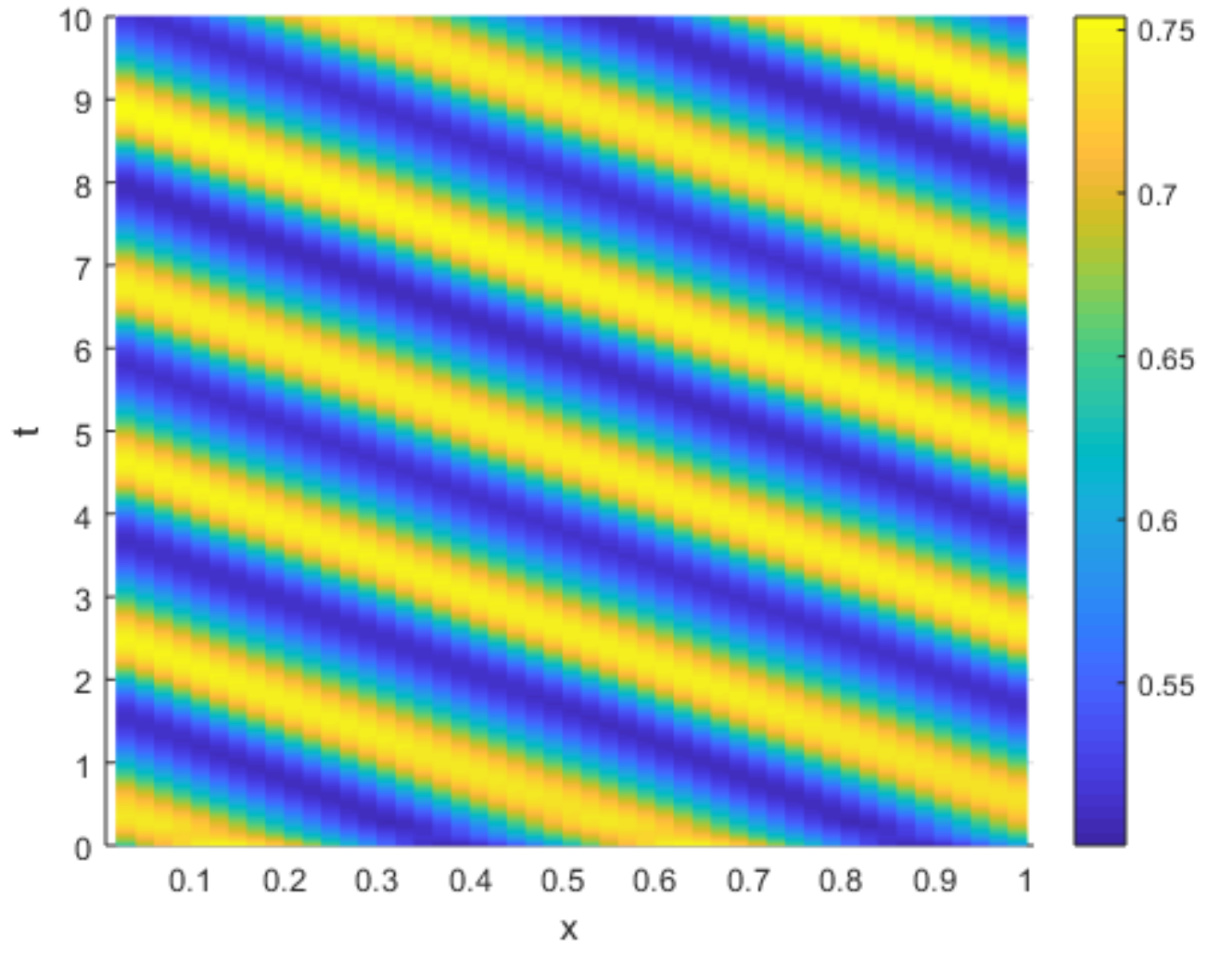}
		\caption{Reproducing the simulation presented in \cite{borsche2018nonlinear}, with $\rho^0(x,1)$ on the left and $\rho^0(x,2)$ on the right.}
		\label{fig:t2_bor}
\end{figure}

In Fig.\ \ref{fig:t2_bor}, the density values on the $(x,t)$-plane are represented by the colors as shown in the colorbar. We obtain persistent waves where the number of waves is directly related to the perturbations in the initial condition, as in \cite{borsche2018nonlinear}.
Numerically, we recover this result if $T_\Delta\in (11\Dt, 16\Dt)$ for $k=1$ and $T_\Delta\in (18\Dt, 22\Dt)$ for $k=2$.

\subsubsection*{Test 2}
 We consider the discrete delayed model \eqref{AlteredLF} with the velocity function \eqref{eq:vel_num} and initial data:
 \begin{equation}
     \rho^0(x)=\begin{cases}
     0.6 & \qquad x< 0.5\\
     0.1 & \qquad x\ge 0.5.
     \end{cases}
 \end{equation}
 Let us assume the time delay as $T_\Delta=10\Dt$ and the time step $\Dt=0.01$ such that the CFL condition is satisfied.
 Moreover, the coefficient $\alpha$ is chosen in such a way as to make the velocity function continuous.
 %
%
Looking at Fig.\ \ref{fig:T1_profile}(left), one can immediately note that the initial slowdown in the first half of the road is increasing in time until vehicles completely stop since $\rho \ge \rho_c$, as the colorbar suggests. This is a typical S\&G wave  behaviour, as we can see also in \cite{tordeux2018traffic}, where they compute the density evolution considering another extension of the LWR model. Indeed they assume that the velocity term depends on $\rho$, the reaction time $\tau$ and the derivative in space of the velocity itself $\partial_x V(\rho)$:

\begin{equation}
    \partial_t\rho + \partial_x \Big( \rho V \Big(\frac{\rho}{1-\tau \partial_x V(\rho)} \Big)\Big)=0.
\end{equation}
 
Investigating the density profile at the end of the simulation, we are able to recognize a well-defined S\&G wave profile, see Fig.\ \ref{fig:T1_profile}(right), as described in \cite{flynn2009PRE}.

\begin{figure}[htb!]
	\centering
	\includegraphics[width=0.49\textwidth]{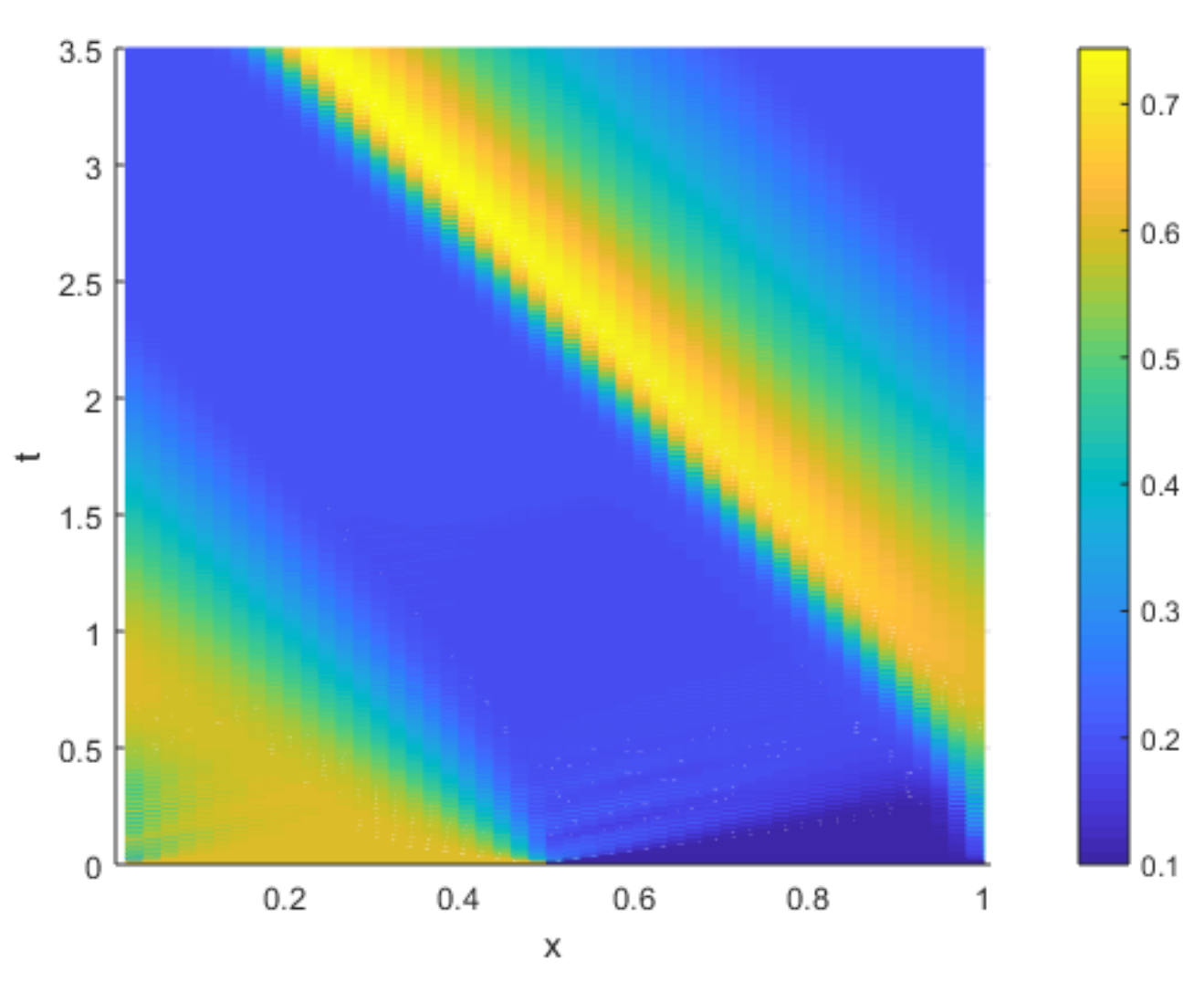}
	\includegraphics[width=0.49\textwidth]{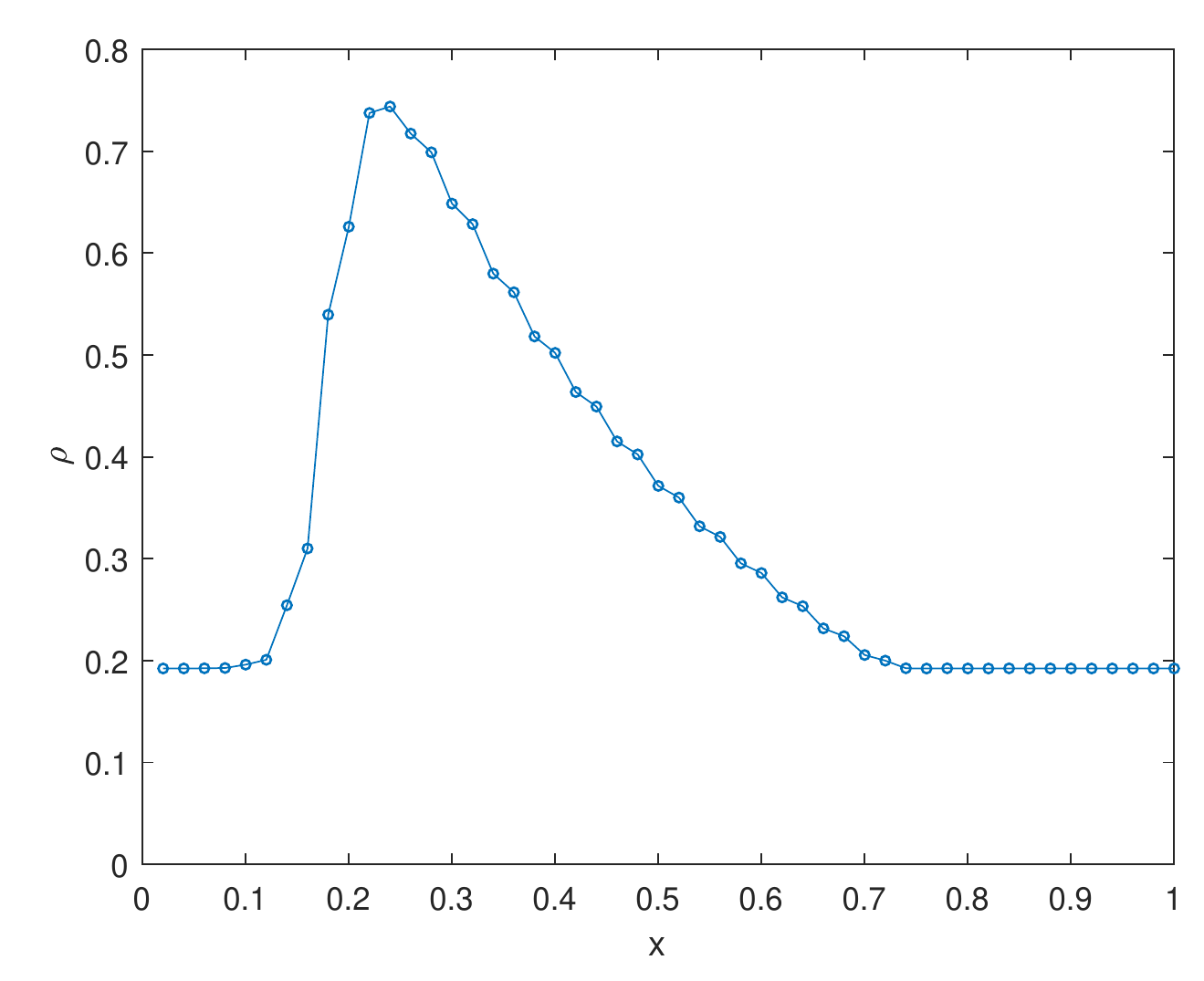}
		\caption{Density values in the $(x,t)$-plane (left) and density profile at time $T=T_f$ (right). }
\label{fig:T1_profile}
\end{figure}

Moreover, let us note that if the delay is smaller, i.e.\ $T_\Delta=4\Dt$, the model is no longer able to preserve the perturbation and the density profile becomes smoother, see Fig.\ \ref{fig:t1_del_low}.

\begin{figure}[htb!]
	\centering
	\includegraphics[width=0.49\textwidth]{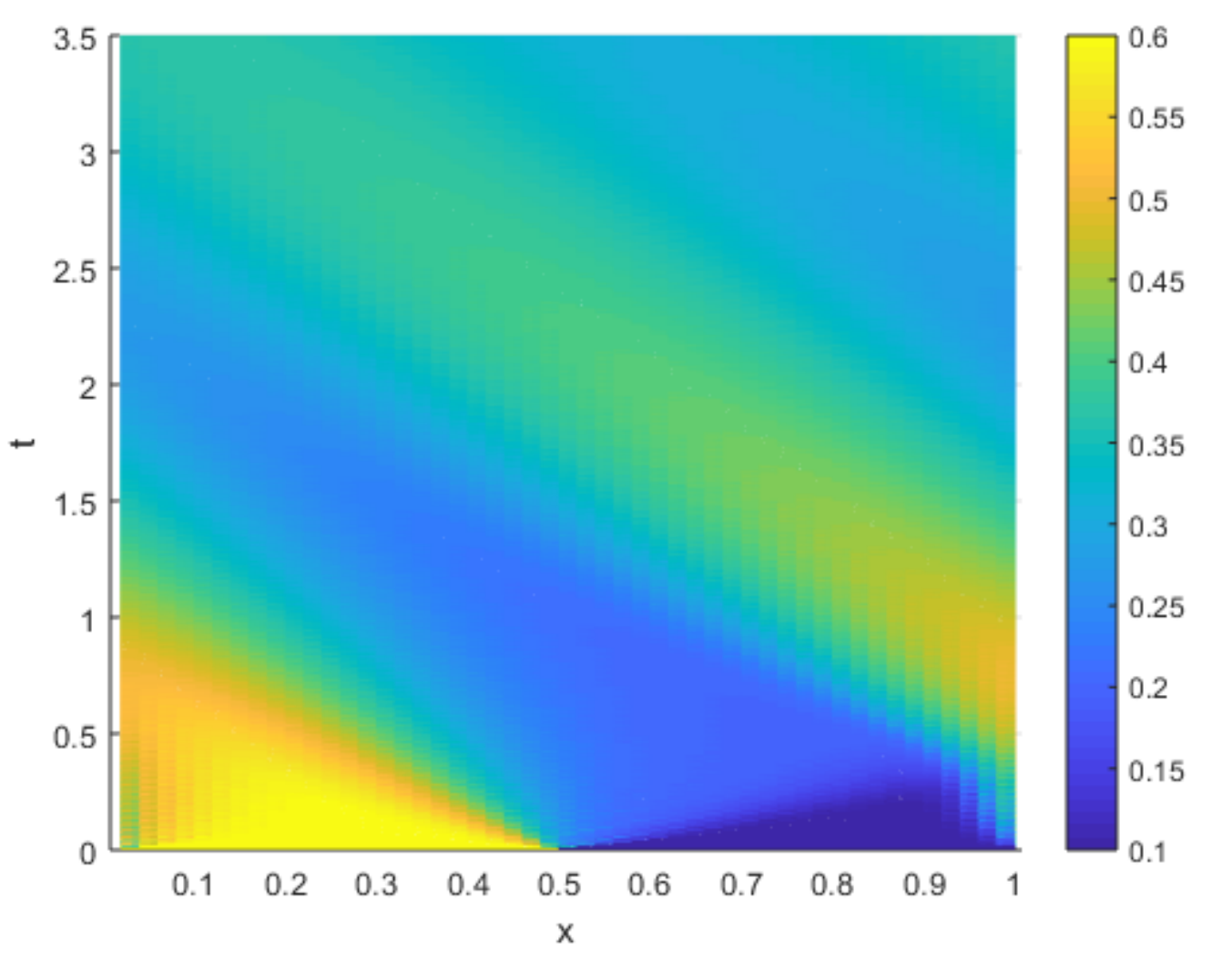}
	\includegraphics[width=0.49\textwidth]{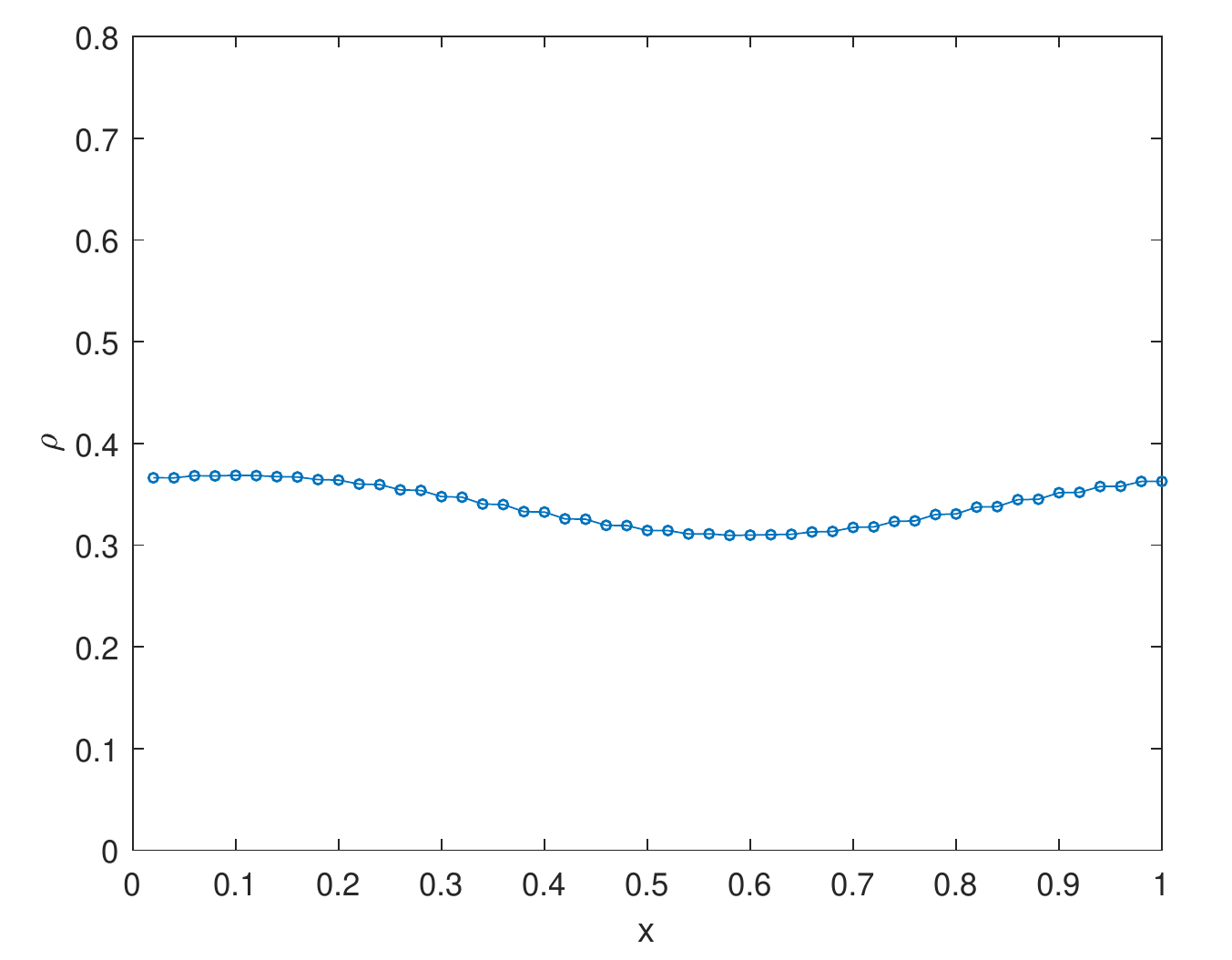}
		\caption{Density values in the $(x,t)$-plane with low delay term, $T_\Delta=4\Dt$, (left) and density profile at $T=T_f$ (right).}
		\label{fig:t1_del_low}
\end{figure}

Choosing the delay term $7 \Dt <T_\Delta< 11 \Dt$, we are able to recover the S\&G behaviour.

\subsection{Triggering of Stop \& Go waves}
Let us now focus on the triggering of S\&G waves. 
Starting from a small perturbation, i.e.\ a slowdown, in the initial data, our aim is to find out if it is possible to recover a S\&G wave. In this direction, let us consider the example presented in \cite{cristiani2019interface}, in which the initial data is given by:
\begin{equation}
    \rho^0(x)=\begin{cases}
    0.35 & \qquad 1.34\le x\le 1.342\\
    0.2 & \qquad elsewhere,
    \end{cases}
\end{equation}
and Dirichlet boundary conditions.
The initial data is modelling a small region, a cell, where vehicles are moving slower than elsewhere and therefore the density is higher in that cell. This slowdown can be caused by the presence of sags, road sections in which gradient changes significantly from downwards to upwards \cite{hoogendoorn2014ITS}, or the presence of a school zone in which the velocity has to be reduced, \cite{song2019second}.

In \cite{cristiani2019interface}, the density evolution is computed by coupling the LWR model with a second order microscopic model, specifically conceived to reproduce S\&G waves.
Instead of switching to multiscale models in which we have to manage the microscopic model too, let us see if we can recover the density evolution with the delayed model.

Assuming $\Dt=0.009$ and $T_\Delta=21\Dt$, we note that the initial perturbation increases and moves backward as the time increases, as it happens in the multiscale case. 
\begin{figure}[htb!]
	\centering
	\includegraphics[width=0.7\textwidth]{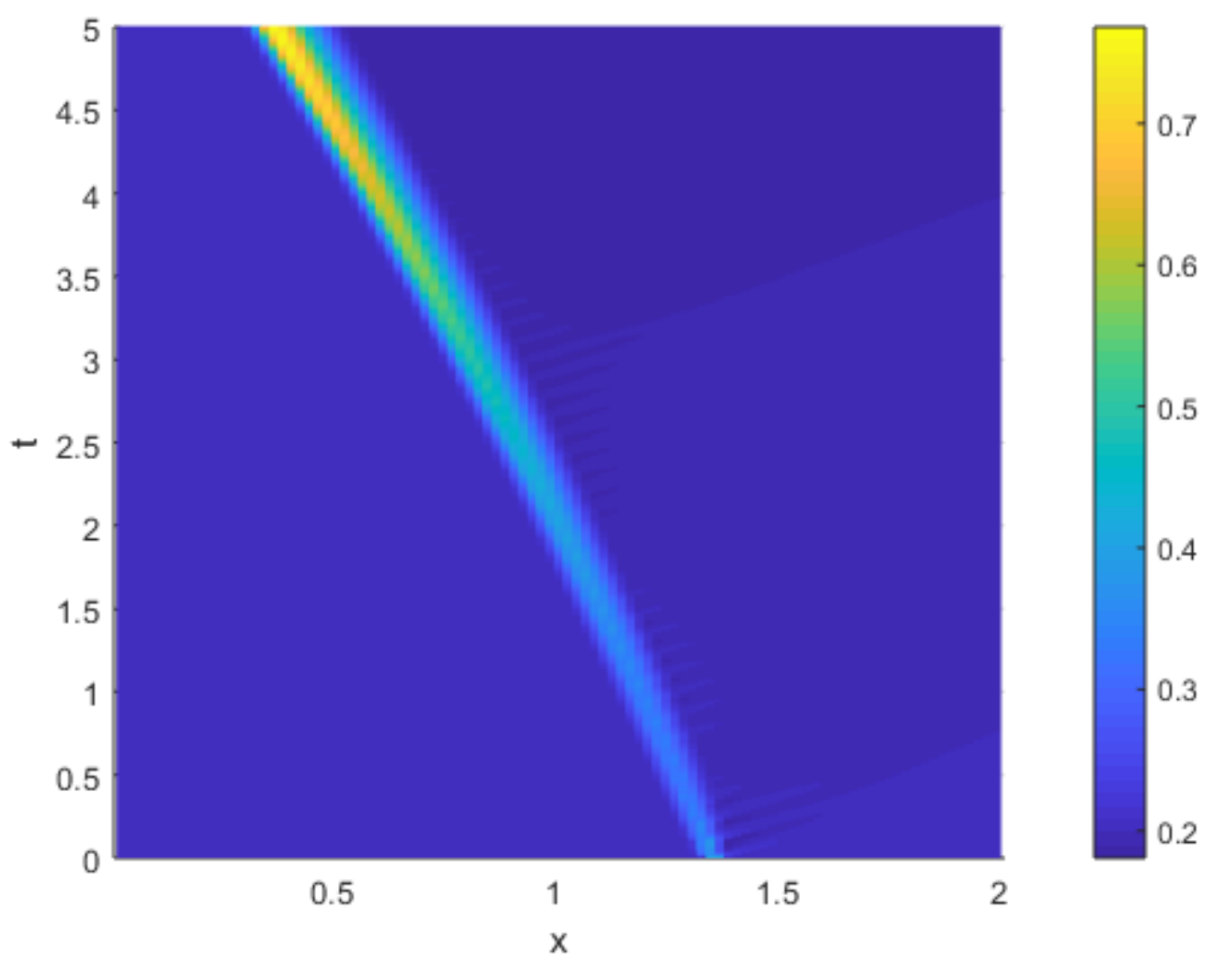}
	\caption{Density values in the $(x,t)$-plane.}
	\label{fig:t3_born}
\end{figure}

Therefore, the delayed model is able to reproduce the triggering of S\&G waves too, not only their backward propagation.

\section*{Conclusion}
In this paper, we have introduced theoretical and numerical properties of the delayed LWR traffic model. While the derivation of the model has been done in~\cite{burger2018derivation}, the numerical behavior of the delayed model has not been studied intensively before. 

Starting from the undelayed scenario, we investigated the theoretical features of the delayed model to point out the numerical properties of the scheme and we proposed an altered Lax-Friedrichs method to compute the evolution of the density.
The key observation therefore is that the delayed model is really able to reproduce Stop \& Go waves for the right choice of parameters. Comparisons to already existing results from the literature also underline this characteristic. 

Future works will include the extension to networks as well as parameter estimation techniques to determine the time delay.

\bibliography{biblio}  

\begin{thebibliography}{10}

\bibitem{aw2002SIAP}
A.~Aw, A.~Klar, M.~Rascle, and T.~Materne.
\newblock Derivation of continuum flow traffic models from microscopic
  follow-the-leader models.
\newblock {\em SIAM J. Appl. Math.}, 63:259--278, 2002.

\bibitem{aw2000SIAP}
A.~Aw and M.~Rascle.
\newblock Resurrection of ``second order'' models of traffic flow?
\newblock {\em SIAM J. Appl. Math.}, 60:916--938, 2000.

\bibitem{bando1995dynamical}
M.~Bando, K.~Hasebe, A.~Nakayama, A.~Shibata, and Y.~Sugiyama.
\newblock Dynamical model of traffic congestion and numerical simulation.
\newblock {\em Physical review E}, 51(2):10--35, 1995.

\bibitem{bianca2013time}
C.~Bianca, M.~Ferrara, and L.~Guerrini.
\newblock The time delays’ effects on the qualitative behavior of an economic
  growth model.
\newblock In {\em Abstract and Applied Analysis}, volume 2013. Hindawi, 2013.

\bibitem{blandin2011general}
S.~Blandin, D.~Work, P.~Goatin, B.~Piccoli, and A.~Bayen.
\newblock A general phase transition model for vehicular traffic.
\newblock {\em SIAM journal on Applied Mathematics}, 71(1):107--127, 2011.

\bibitem{borsche2018nonlinear}
R.~Borsche and A.~Klar.
\newblock A nonlinear discrete velocity relaxation model for traffic flow.
\newblock {\em SIAM Journal on Applied Mathematics}, 78(5):2891--2917, 2018.

\bibitem{braskstone2000car}
M.~Braskstone and M.~McDonald.
\newblock Car following: a historical review, transportation research part f.
  2.
\newblock {\em Pergamon}, 2000.

\bibitem{burger2018derivation}
M.~Burger, S.~G{\"o}ttlich, and T.~Jung.
\newblock Derivation of a first order traffic flow model of
  {L}ighthill-{W}hitham-{R}ichards type.
\newblock {\em IFAC-PapersOnLine}, 51(9):49--54, 2018.

\bibitem{cacace2018measure}
S.~Cacace, F.~Camilli, R.~De~Maio, and A.~Tosin.
\newblock A measure theoretic approach to traffic flow optimisation on
  networks.
\newblock {\em European Journal of Applied Mathematics}, pages 1--23, 2018.

\bibitem{camilli2018measure}
F.~Camilli, R.~De~Maio, and A.~Tosin.
\newblock Measure-valued solutions to nonlocal transport equations on networks.
\newblock {\em Journal of Differential Equations}, 264(12):7213--7241, 2018.

\bibitem{chandler1958traffic}
R.~E. Chandler, R.~Herman, and E.~W. Montroll.
\newblock Traffic dynamics: studies in car following.
\newblock {\em Operations research}, 6(2):165--184, 1958.

\bibitem{colombo2003hyperbolic}
R.~M. Colombo.
\newblock Hyperbolic phase transitions in traffic flow.
\newblock {\em SIAM Journal on Applied Mathematics}, 63(2):708--721, 2003.

\bibitem{colombo2015mixed}
R.~M. Colombo and F.~Marcellini.
\newblock A mixed ode--pde model for vehicular traffic.
\newblock {\em Mathematical Methods in the Applied Sciences}, 38(7):1292--1302,
  2015.

\bibitem{colombo2014micro}
R.~M. Colombo and E.~Rossi.
\newblock On the micro-macro limit in traffic flow.
\newblock {\em Rendiconti del Seminario Matematico della Universit{\`a} di
  Padova}, 131:217--236, 2014.

\bibitem{cristiani2019interface}
E.~Cristiani and E.~Iacomini.
\newblock An interface-free multi-scale multi-order model for traffic flow.
\newblock {\em Discrete \& Continuous Dynamical Systems-Series B}, 25(11),
  2019.

\bibitem{cristiani2016NHM}
E.~Cristiani and S.~Sahu.
\newblock On the micro-to-macro limit for first-order traffic flow models on
  networks.
\newblock {\em Netw. Heterog. Media}, 11:395--413, 2016.

\bibitem{culshaw2000delay}
R.~V. Culshaw and S.~Ruan.
\newblock A delay-differential equation model of hiv infection of cd4+ t-cells.
\newblock {\em Mathematical biosciences}, 165(1):27--39, 2000.

\bibitem{difrancesco2017MBE}
M.~Di~Francesco, S.~Fagioli, and M.~D. Rosini.
\newblock Many particle approximation of the {Aw-Rascle-Zhang} second order
  model for vehicular traffic.
\newblock {\em Math. Biosci. Eng.}, 14(1):127--141, 2017.

\bibitem{flynn2009PRE}
M.~R. Flynn, A.~R. Kasimov, J.-C. Nave, R.~R. Rosales, and B.~Seibold.
\newblock Self-sustained nonlinear waves in traffic flow.
\newblock {\em Phys. Rev. E}, 79:56--113, 2009.

\bibitem{piccolibook}
M.~Garavello and B.~Piccoli.
\newblock {\em Traffic Flow on Networks}.
\newblock AIMS Series on Applied Mathematics, 2006.

\bibitem{garavello2006traffic}
M.~Garavello and B.~Piccoli.
\newblock {\em Traffic flow on networks}, volume~1.
\newblock American institute of mathematical sciences Springfield, 2006.

\bibitem{green1981diffusion}
D.~Green~Jr and H.~W. Stech.
\newblock Diffusion and hereditary effects in a class of population models.
\newblock In {\em Differential equations and applications in ecology,
  epidemics, and population problems}, pages 19--28. Elsevier, 1981.

\bibitem{greenshields1935study}
B.~D. Greenshields.
\newblock A study in highway capacity.
\newblock {\em Highway Research Board Proc., 1935}, pages 448--477, 1935.

\bibitem{herty2020bgk}
M.~Herty, G.~Puppo, S.~Roncoroni, and G.~Visconti.
\newblock The bgk approximation of kinetic models for traffic.
\newblock {\em Kinetic \& Related Models}, 13(2):279--307, 2020.

\bibitem{kwon1980feedback}
W.~Kwon and A.~Pearson.
\newblock Feedback stabilization of linear systems with delayed control.
\newblock {\em IEEE Transactions on Automatic control}, 25(2):266--269, 1980.

\bibitem{lighthill1955RSL}
M.~J. Lighthill and G.~B. Whitham.
\newblock On kinematic waves {II}. {A} theory of traffic flow on long crowded
  roads.
\newblock {\em Proceedings of the Royal Society of London. Series A.
  Mathematical and Physical Sciences}, 229(1178):317--345, 1955.

\bibitem{newell1961nonlinear}
G.~F. Newell.
\newblock Nonlinear effects in the dynamics of car following.
\newblock {\em Operations research}, 9(2):209--229, 1961.

\bibitem{Piccoli2012review}
B.~Piccoli and A.~Tosin.
\newblock Vehicular traffic: a review of continuum mathematical models.
\newblock In R.~Meyers, editor, {\em Encyclopedia of Complexity and Systems
  Science}, pages 9727--9749. Springer, New York, NY, 2009.

\bibitem{puppo2016kinetic}
G.~Puppo, M.~Semplice, A.~Tosin, and G.~Visconti.
\newblock Kinetic models for traffic flow resulting in a reduced space of
  microscopic velocities.
\newblock {\em Kinetic \& Related Models}, 10(3):823, 2016.

\bibitem{rey1993multistability}
A.~D. Rey and C.~Mackey.
\newblock Multistability and boundary layer development in a transport
  equation.
\newblock {\em Canadian Applied Mathematics Quarterly}, pages 61--81, 1993.

\bibitem{richards1956OR}
P.~I. Richards.
\newblock Shock waves on the highway.
\newblock {\em Operations Res.}, 4:42--51, 1956.

\bibitem{hoogendoorn2014ITS}
B.~G. Ros, V.~L. Knoop, B.~van Arem, and S.~P. Hoogendoorn.
\newblock Empirical analysis of the causes of stop-and-go waves at sags.
\newblock {\em IET Intell. Transp. Syst.}, 8(5):499--506, 2014.

\bibitem{rosini2013BOOK}
M.~D. Rosini.
\newblock {\em Macroscopic {M}odels for {V}ehicular {F}lows and {C}rowd
  {D}ynamics: {T}heory and {A}pplications}.
\newblock Understanding Complex Systems. Springer International Publishing,
  2013.

\bibitem{smith2011introduction}
H.~L. Smith.
\newblock {\em An introduction to delay differential equations with
  applications to the life sciences}, volume~57.
\newblock Springer New York, 2011.

\bibitem{song2019second}
J.~Song and S.~Karni.
\newblock A second order traffic flow model with lane changing.
\newblock {\em Journal of Scientific Computing}, 81(3):1429--1445, 2019.

\bibitem{piccoli2018TRC}
R.~E. Stern, S.~Cui, M.~L. Delle~Monache, R.~Bhadani, M.~Bunting, M.~Churchill,
  N.~Hamilton, R.~Haulcy, H.~Pohlmann, F.~Wu, B.~Piccoli, B.~Seibold,
  J.~Sprinkle, and D.~B. Work.
\newblock Dissipation of stop-and-go waves via control of autonomous vehicles:
  Field experiments.
\newblock {\em Transportation Res. Part C}, 89:205--221, 2018.

\bibitem{tordeux2018traffic}
A.~Tordeux, G.~Costeseque, M.~Herty, and A.~Seyfried.
\newblock From traffic and pedestrian follow-the-leader models with reaction
  time to first order convection-diffusion flow models.
\newblock {\em SIAM Journal on Applied Mathematics}, 78(1):63--79, 2018.

\bibitem{treiber2013traffic}
M.~Treiber and A.~Kesting.
\newblock Traffic flow dynamics.
\newblock {\em Traffic Flow Dynamics: Data, Models and Simulation,
  Springer-Verlag Berlin Heidelberg}, 2013.

\bibitem{tosin2017MMS}
G.~Visconti, M.~Herty, G.~Puppo, and A.~Tosin.
\newblock Multivalued fundamental diagrams of traffic flow in the kinetic
  {Fokker--Planck} limit.
\newblock {\em Multiscale Model. Simul.}, 15(3):1267--1293, 2017.

\bibitem{zhang2002non}
H.~M. Zhang.
\newblock A non-equilibrium traffic model devoid of gas-like behavior.
\newblock {\em Transportation Research Part B: Methodological}, 36(3):275--290,
  2002.

\bibitem{zhao2017TRB}
Y.~Zhao and H.~M. Zhang.
\newblock A unified follow-the-leader model for vehicle, bicycle and pedestrian
  traffic.
\newblock {\em Transportation Res. Part B}, 105:315--327, 2017.

\end{thebibliography}
\bibliographystyle{abbrv}

\end{document}